\documentclass[11pt,a4paper]{article}

\usepackage[T1]{fontenc}
\usepackage[utf8]{inputenc}
\usepackage{lmodern}
\usepackage{amsmath,amssymb,amsthm,mathtools}
\usepackage{microtype}
\usepackage[margin=27mm]{geometry}
\usepackage{enumitem}
\usepackage{tikz}
\usepackage[hidelinks,pdfencoding=auto]{hyperref}
\usepackage{bookmark}

\hypersetup{
  pdftitle={Absorption probabilities of random walks without symmetry or general position},
  pdfauthor={Artyom Bolotin and Dmitry Zaporozhets}
}

\numberwithin{equation}{section}

\newtheorem{theorem}{Theorem}[section]
\newtheorem{corollary}[theorem]{Corollary}

\theoremstyle{remark}
\newtheorem{remark}[theorem]{Remark}

\newcommand{\R}{\mathbb{R}}

\newcommand{\E}{\mathbb{E}}
\newcommand{\PP}{\mathbb{P}}
\newcommand{\ind}{\mathbf{1}}
\newcommand{\Sym}{\mathfrak{S}}
\newcommand{\cyc}{\mathcal{C}}
\DeclareMathOperator{\conv}{conv}
\newcommand{\stirling}[2]{\genfrac{[}{]}{0pt}{}{#1}{#2}}
\newcommand{\abs}[1]{\lvert #1\rvert}

\newcommand{\eqd}{\stackrel{d}{=}}

\allowdisplaybreaks[1]

\title{Absorption probabilities of random walks\\ without symmetry or general position\\[0.6em]}
\author{Artyom Bolotin \and Dmitry Zaporozhets}
\date{}

\begin{document}
\maketitle

\begin{abstract}
Let $S_1,S_2,\dots$ be a random walk in $\R^d$ with independent identically distributed increments. We show that, for every distribution of the increments and for every $n\ge1$,
\[
\mathbb P\bigl[0\notin\mathrm{conv}(S_1,\dots,S_n)\bigr] =  \sum_{m=1}^n\frac1{m!}\sum_{\substack{k_1,\dots,k_m\ge1\\ k_1+\dots+k_m=n}}\frac{1}{k_1\cdots k_m}\,\mathbb P\bigl[0\notin\mathrm{conv}(S^{(1)}_{k_1},\dots,S^{(m)}_{k_m})\bigr],
\]
where $S^{(1)},S^{(2)},\dots$ are independent copies of the walk.
No symmetry, general position or moment assumptions are needed. The result rests on a deterministic identity: for arbitrary vectors $x_1,\dots,x_n$, the number of orderings for which the convex hull of the partial sums avoids the origin equals the number of permutations for which the convex hull of the cycle sums avoids it. We derive it from a permutation-averaging identity for valuations obtained in our earlier work. Consequences include the identities of Sparre Andersen and Spitzer, the distribution-free absorption probabilities of Kabluchko, Vysotsky and Zaporozhets, their generating function found by Petrov, Randon-Furling and Zaporozhets, and a formula for the probability that the origin is never absorbed.

\end{abstract}

\medskip
\noindent\textbf{Keywords:} random walk, convex hull, absorption probability, cycles of permutations, valuation, Poisson point process, Wendel's theorem.

\smallskip
\noindent\textbf{MSC 2020:} Primary 60D05, 60G50; Secondary 52A22, 52B45, 05A05, 60G55.

\section{Introduction}\label{sec:intro}

Let $X_1,X_2,\dots$ be independent identically distributed (i.i.d.) random vectors in $\R^d$, and let $S_0=0$ and $S_k=X_1+\dots+X_k$. We say that the origin is \emph{absorbed} by time $n$ if $0\in\conv(S_1,\dots,S_n)$. The boundary of the hull counts; for instance, a visit of the walk to the origin is an absorption. Put
\begin{equation}\label{eq:pn}
p_n=\PP\bigl[0\notin\conv(S_1,\dots,S_n)\bigr],\quad n\ge1,\qquad p_0=1.
\end{equation}

In dimension one the origin is not absorbed if and only if the walk stays strictly positive or strictly negative up to time $n$. The fluctuation theory of Sparre Andersen~\cite{SA49,SA54} and Spitzer~\cite{Spitzer56} expresses these probabilities through the laws of the individual partial sums $S_k$, and for symmetric continuous increments they do not depend on the distribution at all. Kabluchko, Vysotsky and Zaporozhets~\cite{KVZ17a} found the multidimensional form of the last fact: if the increments are centrally symmetric and in general position, then $p_n$ is given by a universal combinatorial expression; see Corollary~\ref{cor:kvz}. The planar case had been treated in~\cite{VZ18}. A proof based on generating functions and the conic Gauss-Bonnet formula was given in~\cite{PRZ23}, and a deformation (wall-crossing) proof has appeared very recently in~\cite{KT26}.

Without symmetry, $p_n$ depends on the distribution of the increments; without general position, boundary effects appear. We show that much of the structure survives when both assumptions are dropped.

For vectors $x_1,\dots,x_n\in\R^d$ and a permutation $\sigma\in\Sym_n$ put
\begin{equation}\label{eq:sums}
s_k(\sigma)=x_{\sigma(1)}+\dots+x_{\sigma(k)},\quad 1\le k\le n,\qquad\quad s_\gamma=\sum_{i\in\gamma}x_i,\quad \gamma\in\cyc(\sigma),
\end{equation}
where $\cyc(\sigma)$ is the set of cycles of $\sigma$, each cycle being regarded as a subset of $\{1,\dots,n\}$. Thus $s_1(\sigma),\dots,s_n(\sigma)$ is the trajectory obtained when the steps are made in the order $\sigma(1),\dots,\sigma(n)$, while the vectors $s_\gamma$ are the sums of the steps along the cycles of $\sigma$.

\begin{theorem}\label{th:det}
For arbitrary $x_1,\dots,x_n\in\R^d$,
\begin{equation}\label{eq:det}
\sum_{\sigma\in\Sym_n}\ind\bigl\{0\notin\conv(s_1(\sigma),\dots,s_n(\sigma))\bigr\}
=\sum_{\sigma\in\Sym_n}\ind\bigl\{0\notin\conv(s_\gamma:\gamma\in\cyc(\sigma))\bigr\}.
\end{equation}
\end{theorem}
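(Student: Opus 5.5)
The plan is to deduce \eqref{eq:det} from the permutation-averaging identity for valuations in our earlier work, after a lifting trick that removes all degeneracies. I take that identity in the following form: for every valuation $\varphi$ on polytopes and arbitrary $y_1,\dots,y_n\in\R^{D}$,
\[
\sum_{\sigma\in\Sym_n}\varphi\bigl(\conv(0,t_1(\sigma),\dots,t_n(\sigma))\bigr)=\sum_{\sigma\in\Sym_n}\varphi\Bigl(\sum_{\gamma\in\cyc(\sigma)}[0,t_\gamma]\Bigr).
\]
Here $t_k(\sigma)$ and $t_\gamma$ are the partial sums and cycle sums of the $y_i$, as in \eqref{eq:sums}, and the sum on the right is a Minkowski sum of segments. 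I need this for valuations that are not continuous; if the earlier result only covers a restricted class, checking that it covers the functional below is the first thing to verify.

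\emph{Lifting.} Put $D=d+1$, $y_i=(x_i,1)$ and $L=\{0\}\times[0,\infty)$. Then $t_k(\sigma)=(s_k(\sigma),k)$ and $t_\gamma=(s_\gamma,\abs{\gamma})$, so the lifted cycle sums are exactly the lifts of the cycle sums. None of these vectors is zero, which is the reason for the lift.

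\emph{Left-hand side.} Let $K=\conv(0,t_1,\dots,t_n)$. I claim $K\cap L=\{0\}$ if and only if $0\notin\conv(s_1,\dots,s_n)$.
\begin{itemize}
\item If $0=\sum\lambda_k s_k$ is a convex combination, then $\sum\lambda_k t_k$ is a point of $K\cap L$ with positive height.
\item Conversely, suppose $\mu_0\cdot 0+\sum_{k\ge1}\mu_k t_k\in L\setminus\{0\}$. Then $\sum_{k\ge1}\mu_k>0$, and normalising gives $0\in\conv(s_k)$.
\end{itemize}

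\emph{Right-hand side.} Let $Z=\sum_\gamma[0,t_\gamma]$. I claim $Z\cap L=\{0\}$ if and only if $0\notin\conv(s_\gamma:\gamma\in\cyc(\sigma))$. A point of $Z$ has the form $\sum_\gamma\tau_\gamma t_\gamma$ with $\tau_\gamma\in[0,1]$. It lies on $L\setminus\{0\}$ exactly when $\sum_\gamma\tau_\gamma s_\gamma=0$ with the $\tau_\gamma$ not all zero, since the height $\sum\tau_\gamma\abs{\gamma}$ is then positive. Normalising, this is equivalent to $0\in\conv(s_\gamma)$.

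Boundary cases, such as $s_k=0$ or $s_\gamma=0$, cause no trouble, because they simply produce a point of $L$ at positive height.

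\emph{Valuation.} Define $\varphi(P)=\ind\{P\cap L=\{0\}\}$ for polytopes $P\ni 0$; both $K$ and $Z$ contain $0$. For such $P$, the intersection $P\cap L$ is a segment $[0,a]$, and
\[
\varphi(P)=\chi(P\cap L)-\chi\bigl(P\cap L\cap\{h\ge\varepsilon\}\bigr)
\]
for every $\varepsilon>0$ smaller than all nonzero $a$ that occur, where $h$ is the last coordinate. For fixed $\varepsilon$ the right side is a difference of Euler characteristics of intersections with fixed closed convex sets, hence a valuation. Only finitely many polytopes enter, namely the $K$ and $Z$ and the intersections generated in the inclusion–exclusion arguments, so one $\varepsilon$ works for all of them.

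Applying the identity to this valuation and substituting the two characterisations gives \eqref{eq:det}.

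The main obstacle I expect is compatibility with the earlier result. The valuation identity may be stated for a restricted class, for instance continuous or translation-invariant valuations, or only for the full $\Sym_n$-average of $\conv(0,\cdot)$. If so, I would instead use its indicator-function (polytope-algebra) form, equality of $\sum_\sigma\ind_{K_\sigma}$ and $\sum_\sigma\ind_{Z_\sigma}$ in the polytope algebra, and then evaluate the Euler-characteristic functional above on it.
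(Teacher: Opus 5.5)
Your proof is correct and is essentially the paper's argument pushed forward by the linear map $t\mapsto(At,t_1+\dots+t_n)$ from $\R^n$ to $\R^{d+1}$. This map sends $e_i$ to $(x_i,1)$ and $\Delta_\sigma$, $Z_\sigma$ to your $K$, $Z$, and the paper's affine subspace $Q$ is the preimage of the point $(0,1)$. Since every nonzero height you produce is at least $1$, you may take $\varepsilon=1$; then your valuation equals $1-\ind\{(0,1)\in P\}$ on polytopes containing $0$, and the remark about intersections generated in inclusion--exclusion is unnecessary.

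Your compatibility worry is resolved. The paper applies the identity to every valuation on convex polytopes, with no continuity or invariance assumption. The version for arbitrary $y_i$ follows from the basis case because $P\mapsto\varphi(AP)$ is again a valuation: when $P\cup P'$ is convex, $A(P\cap P')=AP\cap AP'$, by the same segment argument the paper uses to show that $\varphi_Q$ is a valuation.
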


In dimension one, a finite set of reals avoids the origin in its convex hull if and only if it lies in $(0,\infty)$ or in $(-\infty,0)$, and~\eqref{eq:det} becomes the passage from orderings to cycles that underlies the identities of Sparre Andersen and Spitzer~\cite{Spitzer56}, \cite[Chapter~XII]{Feller71}. The point of Theorem~\ref{th:det} is that the identity holds in every dimension and for every configuration, degenerate ones included. The boundary counts on both sides: for $n=2$, $x_1=1$ and $x_2=-1$, the path hulls $\conv(1,0)$ and $\conv(-1,0)$ contain the origin on their boundary, while the cycle hulls are $[-1,1]$ and $\{0\}$, so that replacing all hulls in~\eqref{eq:det} by their interiors would give $2=1$.

For i.i.d.\ increments, sums over disjoint cycles are independent and distributed as partial sums of the walk. This leads to the following result, in which $S^{(1)},S^{(2)},\dots$ are independent copies of the walk.

\begin{theorem}\label{th:main}
For every distribution of the increments and every $0\le t<1$,
\begin{equation}\label{eq:gf}
\sum_{n=0}^\infty p_nt^n=1+\sum_{m=1}^\infty\frac1{m!}\sum_{k_1,\dots,k_m\ge1}\frac{t^{k_1+\dots+k_m}}{k_1\cdots k_m}\,\PP\bigl[0\notin\conv(S^{(1)}_{k_1},\dots,S^{(m)}_{k_m})\bigr].
\end{equation}
In particular, for every $n\ge1$,
\begin{equation}\label{eq:main}
p_n=\sum_{m=1}^n\frac1{m!}\sum_{\substack{k_1,\dots,k_m\ge1\\ k_1+\dots+k_m=n}}\frac{1}{k_1\cdots k_m}\,\PP\bigl[0\notin\conv(S^{(1)}_{k_1},\dots,S^{(m)}_{k_m})\bigr].
\end{equation}
Moreover,~\eqref{eq:gf} is equivalent to the following identity. Let $0<t<1$, let $N$ be independent of the walk with $\PP[N=n]=(1-t)t^n$, $n\ge0$, and let $\Pi_t$ be a Poisson point process in $\R^d$ with intensity measure
\begin{equation}\label{eq:nu}
\nu_t=\sum_{k=1}^\infty\frac{t^k}{k}\,\PP[S_k\in\cdot\,].
\end{equation}
Then
\begin{equation}\label{eq:poisson-intro}
\PP\bigl[0\notin\conv(S_1,\dots,S_N)\bigr]=\PP\bigl[0\notin\conv(\Pi_t)\bigr].
\end{equation}
\end{theorem}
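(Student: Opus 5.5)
We derive Theorem~\ref{th:main} from Theorem~\ref{th:det} by averaging over the increments. First we obtain \eqref{eq:main}, then the generating function, then the Poisson reformulation.

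\emph{Step 1: from \eqref{eq:det} to \eqref{eq:main}.} Apply \eqref{eq:det} to $x_i=X_i$, take expectations and divide by $n!$. The increments are i.i.d., so $(X_{\sigma(1)},\dots,X_{\sigma(n)})$ has the same law as $(X_1,\dots,X_n)$. Hence the left-hand side has expectation $n!\,p_n$. On the right-hand side, fix $\sigma$ with cycles $\gamma_1,\dots,\gamma_m$ of lengths $k_1,\dots,k_m$. The cycle sums $s_{\gamma_j}$ are sums over disjoint index sets, so they are independent with $s_{\gamma_j}\eqd S_{k_j}$. Therefore $\PP[0\notin\conv(s_\gamma:\gamma\in\cyc(\sigma))]=\PP[0\notin\conv(S^{(1)}_{k_1},\dots,S^{(m)}_{k_m})]$, and this depends only on the multiset $\{k_1,\dots,k_m\}$, since the hull ignores order. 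It remains to count permutations by cycle lengths. The number of permutations whose cycles, listed in some order, have lengths $(k_1,\dots,k_m)$ is $n!/(k_1\cdots k_m)$ as ordered tuples. Each permutation with $m$ cycles arises exactly $m!$ times, once for each ordering of its cycles. Equivalently, there are $n!/\prod_j k_j\cdot\frac1{m!}$ permutations per ordered composition once we sum over all compositions. Summing over $m$ and over compositions $(k_1,\dots,k_m)$ of $n$ gives \eqref{eq:main}. This step needs only measurability of the events, which holds because $\{(y_1,\dots,y_m):0\in\conv(y_1,\dots,y_m)\}$ is closed. No moment or other assumptions are used.

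\emph{Step 2: generating function.} Multiply \eqref{eq:main} by $t^n$ and sum over $n\ge0$, using $p_0=1$. Every term is nonnegative, so we may regroup freely. The compositions of all $n$ together run over all tuples $k_1,\dots,k_m\ge1$, which gives \eqref{eq:gf}. Convergence for $0\le t<1$ holds because $p_n\le1$.

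\emph{Step 3: Poisson identity.} Multiply by $1-t$. The left side becomes $\sum_n(1-t)t^np_n=\PP[0\notin\conv(S_1,\dots,S_N)]$, where $N$ is independent of the walk and we use the convention $\conv(\varnothing)=\varnothing$, so the event holds when $N=0$. For the right side, the total mass of $\nu_t$ is $\sum_k t^k/k=-\log(1-t)$, so $e^{-\nu_t(\R^d)}=1-t$. The process $\Pi_t$ has a $\mathrm{Poisson}(-\log(1-t))$ number $M$ of points, which given $M=m$ are i.i.d.\ with law $\nu_t/\nu_t(\R^d)$. Each point is $S_k$ with weight proportional to $t^k/k$. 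Conditioning on $M=m$ and expanding gives
\[
\PP[0\notin\conv(\Pi_t)]=e^{-\nu_t(\R^d)}\sum_{m\ge0}\frac1{m!}\int\!\cdots\!\int\ind\{0\notin\conv(y_1,\dots,y_m)\}\,\nu_t(dy_1)\cdots\nu_t(dy_m).
\]
Expanding each $\nu_t$ as its series reproduces exactly $(1-t)$ times the right side of \eqref{eq:gf}. So \eqref{eq:gf} and \eqref{eq:poisson-intro} are the same equation multiplied by $1-t$, and they are equivalent for each $t\in(0,1)$. The power series on both sides determine each other, so the identity for all $t\in(0,1)$ is also equivalent to \eqref{eq:main}.

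The only delicate point is the counting in Step 1, that is, getting the factor $1/m!$ right with ordered rather than unordered compositions. Everything else is bookkeeping with nonnegative series.
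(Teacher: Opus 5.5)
Your proof is correct, but it runs in the opposite direction from the paper's. You average \eqref{eq:det} at fixed $n$ and count permutations by ordered cycle lengths, which gives \eqref{eq:main} directly. The count is that ordered tuples of disjoint cycles with lengths $k_1,\dots,k_m$ number $n!/(k_1\cdots k_m)$, and each permutation with $m$ cycles appears $m!$ times. You then obtain \eqref{eq:gf} by summation, and \eqref{eq:poisson-intro} is only an algebraic rewriting of $(1-t)$ times \eqref{eq:gf}.

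The paper instead proceeds as follows:
\begin{enumerate}
\item It records the identity \eqref{eq:exch-abs} for a uniform random $\sigma$, using exchangeability alone.
\item It randomizes $n$ by the geometric $N$ and applies Cauchy's formula in the Shepp--Lloyd form: the cycle counts are independent Poisson variables with means $t^k/k$.
\item With the marking and superposition theorems, it shows that the multiset of cycle sums \emph{is} a Poisson process with intensity $\nu_t$.
\end{enumerate}
Thus \eqref{eq:poisson-intro} comes first, via an actual identification in law, and \eqref{eq:gf} and \eqref{eq:main} are read off by expanding.

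Your route is more elementary. It needs no Poisson process theory, and the same Cauchy count enters only as a finite combinatorial identity. The paper's route explains where $\Pi_t$ comes from: it is the set of cycle sums of a uniform permutation of geometric size. It also separates the step that needs only exchangeability, which is what Remark~\ref{rem:blocks} builds on.

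One wording fix: in Step~1, $n!/(k_1\cdots k_m)$ counts ordered tuples of cycles, not permutations. Your next sentence, dividing by $m!$, makes the count right.
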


Here $\conv(\varnothing)=\varnothing$, and $\Pi_t$ is a finite random multiset, since $\nu_t(\R^d)=\log\frac1{1-t}$. Theorem~\ref{th:main} replaces the dependent positions of the walk by independent points whose laws are those of single partial sums. Symmetry and general position enter only when the probabilities in~\eqref{eq:gf} are evaluated. Under these assumptions Wendel's theorem~\cite{Wendel62} shows that they depend on $m$ alone, and~\eqref{eq:main} becomes
\begin{equation}\label{eq:mixture-intro}
p_n=\E\,w_{K_n,d},\qquad w_{m,d}=2^{1-m}\sum_{j=0}^{d-1}\binom{m-1}{j},
\end{equation}
where $K_n$ is the number of cycles of a uniform random permutation of $\{1,\dots,n\}$. In words, a symmetric walk in general position avoids the origin exactly as often as $K_n$ independent symmetric points do. This gives the formulas of~\cite{KVZ17a} and~\cite{PRZ23} (Section~\ref{sec:symmetric}); the representation~\eqref{eq:mixture-intro} was also obtained in~\cite[Corollary~23]{KT26} by a different route.

Letting $t\uparrow1$ in~\eqref{eq:poisson-intro}, we obtain the probability that the origin is never absorbed.

\begin{theorem}\label{th:never}
For every random walk with i.i.d.\ increments,
\[
\PP\bigl[0\notin\conv\{S_1,S_2,\dots\}\bigr]=\lim_{n\to\infty}p_n=\PP\bigl[0\notin\conv(\Pi_1)\bigr],
\]
where $\Pi_1$ is the superposition of independent Poisson processes with intensities $k^{-1}\PP[S_k\in\cdot\,]$, $k\ge1$, that is, a Poisson process with intensity $\nu_1=\sum_{k\ge1}k^{-1}\PP[S_k\in\cdot\,]$. This superposition defines $\Pi_1$ as a counting measure that need not be locally finite.
\end{theorem}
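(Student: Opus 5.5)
The plan is to let $t\uparrow1$ on both sides of~\eqref{eq:poisson-intro}, using monotonicity and a coupling on each side.

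\emph{Left side.} The events $A_n=\{0\notin\conv(S_1,\dots,S_n)\}$ decrease in $n$, since adding points only enlarges the hull. Their intersection is $\{0\notin\conv\{S_1,S_2,\dots\}\}$: the convex hull of an infinite set is the union of the hulls of its finite subsets, because every convex combination involves only finitely many points. Hence $p_n\downarrow p_\infty:=\PP[0\notin\conv\{S_1,S_2,\dots\}]$. Moreover
\[
\PP\bigl[0\notin\conv(S_1,\dots,S_N)\bigr]=\sum_n(1-t)t^np_n,
\]
which is an Abel mean of a convergent sequence. So it tends to $\lim p_n=p_\infty$ as $t\uparrow1$.

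\emph{Right side.} I will construct all $\Pi_t$, $0<t\le1$, on one probability space so that they increase with $t$. For each $k\ge1$, take independent Poisson processes $\Xi_k$ on $\R^d\times[0,1]$ with intensity $k^{-1}\PP[S_k\in\cdot\,]\otimes k u^{k-1}\,du$. The second factor is chosen so that the mark $u\le t$ has mass $t^k$. Define $\Pi_t$ as the superposition over $k$ of the points of $\Xi_k$ with mark at most $t$. Then $\Pi_t$ is Poisson with intensity $\sum_k t^kk^{-1}\PP[S_k\in\cdot\,]=\nu_t$, and $\Pi_1$ is exactly the superposition described in the statement.

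The events $\{0\notin\conv(\Pi_t)\}$ decrease in $t$. For $t<1$ each $\Pi_t$ is a.s.\ finite, and $\Pi_1=\bigcup_{t<1}\Pi_t$ because every mark is strictly below $1$ almost surely. Again using that a hull is the union of hulls of finite subsets, any finite subset of $\Pi_1$ lies in some $\Pi_t$ with $t<1$. Therefore
\[
\{0\notin\conv(\Pi_1)\}=\bigcap_{t<1}\{0\notin\conv(\Pi_t)\},
\]
and monotone continuity gives $\PP[0\notin\conv(\Pi_t)]\downarrow\PP[0\notin\conv(\Pi_1)]$. Combining the two limits with~\eqref{eq:poisson-intro} yields the theorem.

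\emph{Main obstacle.} The delicate point is the meaning of $\Pi_1$ when $\nu_1$ is infinite, since it may not be locally finite. The superposition must be a well-defined countable random multiset, and $\conv(\Pi_1)$ must be the hull of its countably many atoms. The marked coupling handles this, because $\Pi_1$ is a countable union of a.s.\ countable sets. One must also check that the law of $\Pi_1$ does not depend on the coupling. It does not, since it is the law of the superposition of the independent Poisson processes with intensities $k^{-1}\PP[S_k\in\cdot\,]$ obtained by projecting the $\Xi_k$. Measurability of the event $\{0\notin\conv(\Pi_1)\}$ follows from writing it as a countable intersection of events about finite subsets, for example $\bigcap_{j}\{0\notin\conv(\Pi_{1-1/j})\}$.
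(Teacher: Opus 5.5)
Your proposal is correct and follows the paper's proof. Both use Abel's theorem for the decreasing sequence $p_n$, a monotone coupling of all $\Pi_t$ through marked Poisson processes, and the reduction of $0\in\conv(\Pi_1)$ to finite subsets, each of which lies in some $\Pi_t$ with $t<1$. The only difference is cosmetic: the paper uses uniform marks and keeps the points with $u<t^k$, while your marks with density $ku^{k-1}$ and threshold $u\le t$ give the same coupling after the substitution $u\mapsto u^{1/k}$.
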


For $d=1$, Theorem~\ref{th:never} gives the sum of Spitzer's formulas for the probabilities that the walk stays positive forever and that it stays negative forever. The formulas themselves, together with the identities of Sparre Andersen, are recovered in Section~\ref{sec:dim1}.

The new results are Theorems~\ref{th:det}, \ref{th:main} and~\ref{th:never}, which need neither symmetry nor general position; in dimension one they reduce to classical identities. Theorem~\ref{th:det} follows from the permutation-averaging identity for valuations obtained in our earlier paper~\cite{BZ26} (see also~\cite[formula~(32)]{Bol25}), applied to the indicator that a polytope meets a fixed affine subspace. In~\cite{BZ26} the same identity was applied to mixed volumes; this gives the expected mixed volumes of convex hulls of random walks and, in particular, the formulas of~\cite{VZ18} for their expected intrinsic volumes without the assumption that the increments put no mass on affine hyperplanes. The remaining statements are known, and we derive them to show how they fit together.

Section~\ref{sec:deterministic} contains the proof of Theorem~\ref{th:det}. Section~\ref{sec:poisson} proves Theorems~\ref{th:main} and~\ref{th:never}. Section~\ref{sec:dim1} is devoted to dimension one, and Section~\ref{sec:symmetric} derives the formulas for symmetric walks in general position.

\section{The deterministic identity}\label{sec:deterministic}

A real-valued function $\varphi$ on the convex polytopes in $\R^n$ is called a valuation if
\[
\varphi(P\cup P')+\varphi(P\cap P')=\varphi(P)+\varphi(P')
\]
whenever $P$, $P'$ and $P\cup P'$ are convex polytopes. Let $e_1,\dots,e_n$ be the standard basis of $\R^n$, and put $e_B=\sum_{i\in B}e_i$ for $B\subset\{1,\dots,n\}$. With every $\sigma\in\Sym_n$ we associate the simplex and the parallelotope
\[
\Delta_\sigma=\conv\bigl(0,e_{\sigma(1)},e_{\sigma(1)}+e_{\sigma(2)},\dots,e_{\sigma(1)}+\dots+e_{\sigma(n)}\bigr),\qquad Z_\sigma=\sum_{\gamma\in\cyc(\sigma)}[0,e_\gamma].
\]
By the permutation-averaging identity~\cite[Proposition~4.9 and Remark~4.10]{BZ26} (see also~\cite[formula~(32)]{Bol25}), applied to the linearly independent vectors $e_1,\dots,e_n$, every valuation $\varphi$ on the convex polytopes in $\R^n$ satisfies
\begin{equation}\label{eq:BZ}
\sum_{\sigma\in\Sym_n}\varphi(\Delta_\sigma)=\sum_{\sigma\in\Sym_n}\varphi(Z_\sigma).
\end{equation}
Identities of this type go back to the combinatorial lemmas of Spitzer~\cite{Spitzer56} and of Barndorff-Nielsen and Baxter~\cite{BNB63}; see~\cite[Remark~4.10]{BZ26}.

We apply~\eqref{eq:BZ} to valuations of the following form. For a convex set $Q\subset\R^n$, let $\varphi_Q(P)=1$ if $P\cap Q\ne\varnothing$ and $\varphi_Q(P)=0$ otherwise. This is a valuation. Indeed, let $P$, $P'$ and $P\cup P'$ be convex polytopes. If $P$ and $P'$ both meet $Q$, then the segment joining a point of $P\cap Q$ with a point of $P'\cap Q$ lies in $(P\cup P')\cap Q$ and is covered by the closed sets $P$ and $P'$, so it meets $P\cap P'$, and both sides of the valuation identity are equal to~$2$. Otherwise $P\cap P'$ does not meet $Q$, and $\varphi_Q(P\cup P')=\varphi_Q(P)+\varphi_Q(P')$.

\begin{proof}[Proof of Theorem~\ref{th:det}]
Let $A\colon\R^n\to\R^d$ be the linear map with $Ae_i=x_i$, and let
\[
Q=\{t\in\R^n:At=0,\ t_1+\dots+t_n=1\}.
\]
Put $f_k=e_{\sigma(1)}+\dots+e_{\sigma(k)}$, so that $Af_k=s_k(\sigma)$. The points of $\Delta_\sigma$ are the vectors $t=\sum_k\mu_kf_k$ with $\mu_k\ge0$ and $\sum_k\mu_k\le1$, and for them
\[
At=\sum_{k=1}^n\mu_ks_k(\sigma),\qquad t_1+\dots+t_n=\sum_{k=1}^nk\mu_k.
\]
Hence $\Delta_\sigma$ meets $Q$ if and only if $\sum_k\mu_ks_k(\sigma)=0$ for some $\mu_k\ge0$ with $\sum_kk\mu_k=1$; the condition $\sum_k\mu_k\le1$ then holds automatically. Normalizing the coefficients, we see that this is the case if and only if $0\in\conv(s_1(\sigma),\dots,s_n(\sigma))$. In the same way, the points of $Z_\sigma$ are the vectors $t=\sum_\gamma c_\gamma e_\gamma$ with $0\le c_\gamma\le1$, and for them $At=\sum_\gamma c_\gamma s_\gamma$ and $t_1+\dots+t_n=\sum_\gamma\abs\gamma c_\gamma$. The conditions $c_\gamma\le1$ hold automatically when $\sum_\gamma\abs\gamma c_\gamma=1$, and therefore $Z_\sigma$ meets $Q$ if and only if $0\in\conv(s_\gamma:\gamma\in\cyc(\sigma))$. Thus~\eqref{eq:BZ} with $\varphi=\varphi_Q$ says that the convex hulls of the partial sums and of the cycle sums contain the origin for equally many permutations. Subtracting both numbers from $n!$ gives~\eqref{eq:det}.
\end{proof}

\begin{remark}\label{rem:duality}
The proof rests on a duality between the space $\R^d$ of the configuration and the space $\R^n$ of the coefficients: the vectors $x_1,\dots,x_n$ are replaced by the standard basis of $\R^n$, and the origin of $\R^d$ by the affine subspace $Q=\ker A\cap\{t_1+\dots+t_n=1\}$. Since $\Delta_\sigma=\{t:1\ge t_{\sigma(1)}\ge\dots\ge t_{\sigma(n)}\ge0\}$, the origin lies in $\conv(s_1(\sigma),\dots,s_n(\sigma))$ if and only if the linear subspace $\ker A$ meets the cone $\{t:t_{\sigma(1)}\ge\dots\ge t_{\sigma(n)}\ge0\}$ outside the origin. A similar duality was the starting point of~\cite{KVZ17a}. These cones and their images under the sign changes of coordinates are the closed Weyl chambers of type $B_n$, and in~\cite{KVZ17a} the absorption probabilities of symmetric random walks in general position were obtained by counting the Weyl chambers of type $B_n$ met by a generic linear subspace of codimension~$d$.
\end{remark}

\begin{remark}\label{rem:several}
Theorem~\ref{th:det} extends to several walks. Let $x^{(a)}_1,\dots,x^{(a)}_{n_a}\in\R^d$, $a=1,\dots,r$, and let $\varphi$ be a valuation on the convex polytopes in $\R^{n_1}\times\dots\times\R^{n_r}$. If the polytopes in all factors but one are fixed, then $\varphi$ of their product with a polytope $P$ in the remaining factor is a valuation in $P$. Applying~\eqref{eq:BZ} in one factor at a time, we see that the sums of $\varphi(\Delta_{\sigma_1}\times\dots\times\Delta_{\sigma_r})$ and of $\varphi(Z_{\sigma_1}\times\dots\times Z_{\sigma_r})$ over $\sigma_1\in\Sym_{n_1},\dots,\sigma_r\in\Sym_{n_r}$ coincide. As in the proof of Theorem~\ref{th:det}, it follows that the number of tuples $(\sigma_1,\dots,\sigma_r)$ for which the joint convex hull of all partial sums $s^{(a)}_k(\sigma_a)$ avoids the origin equals the number of tuples for which the joint convex hull of all cycle sums $s^{(a)}_\gamma$, $\gamma\in\cyc(\sigma_a)$, $1\le a\le r$, avoids it. Averaging over independent uniform permutations as in Section~\ref{sec:poisson}, one obtains absorption probabilities for several independent random walks $S^{(1)},\dots,S^{(r)}$ with i.i.d.\ increments. If the increments are symmetric and put no mass on affine hyperplanes, Wendel's theorem (Section~\ref{sec:symmetric}) shows that the joint convex hull of all positions $S^{(a)}_k$, $1\le k\le n_a$, $1\le a\le r$, avoids the origin with probability $\E\,w_{K^{(1)}+\dots+K^{(r)},d}$, where $K^{(1)},\dots,K^{(r)}$ are independent and $K^{(a)}$ is the number of cycles of a uniform random permutation of $\{1,\dots,n_a\}$. This is the formula of~\cite{KVZ17a} for several walks.
\end{remark}

\begin{remark}\label{rem:reflection}
The simplex $\Delta_\sigma$ is the intersection of the cube $[0,1]^n$ with the closed chamber $\{t:t_{\sigma(1)}\ge\dots\ge t_{\sigma(n)}\}$ of the braid arrangement formed by the hyperplanes $\{t_i=t_j\}$, and $Z_\sigma$ is the intersection of the cube with the space of vectors fixed by the coordinate permutation $\sigma$. The braid arrangement is the reflection arrangement of the symmetric group $\Sym_n$, and the arrangement in $\R^{n_1}\times\dots\times\R^{n_r}$ formed by the hyperplanes of the factors is the reflection arrangement of the product group $\Sym_{n_1}\times\dots\times\Sym_{n_r}$; this is why products appear in Remark~\ref{rem:several}. More generally, let $W$ be a finite real reflection group acting on $\R^n$, let $C$ be a closed fundamental chamber, and let $P\subset\R^n$ be a convex polytope. For every valuation $\varphi$ with $\varphi(\varnothing)=0$,
\[
\sum_{w\in W}\varphi(P\cap wC)=\sum_{w\in W}\varphi(P\cap\operatorname{Fix}(w)),
\qquad \operatorname{Fix}(w)=\{x\in\R^n:wx=x\}.
\]
Indeed, at each point the number of closed chambers containing it equals the number of group elements fixing it. Intersecting with $P$ and applying the valuation property on a common polytopal subdivision gives the displayed identity. Analogous absorption results can be obtained in this setting. We do not pursue this here and will return to it elsewhere.
\end{remark}

\section{Random walks: proofs}\label{sec:poisson}

\begin{proof}[Proof of Theorem~\ref{th:main}]
First let $X_1,\dots,X_n$ be exchangeable random vectors (symmetrically dependent in the terminology of~\cite{SA53}), let $\sigma$ be a uniform random permutation of $\{1,\dots,n\}$ independent of them, and let $S_\gamma=\sum_{i\in\gamma}X_i$ for $\gamma\in\cyc(\sigma)$. By exchangeability, every term on the left-hand side of~\eqref{eq:det}, evaluated at $x_i=X_i$, has the same distribution as the term with $\sigma=\mathrm{id}$. Taking expectations and dividing by $n!$, we obtain
\begin{equation}\label{eq:exch-abs}
\PP\bigl[0\notin\conv(S_1,\dots,S_n)\bigr]=\PP\bigl[0\notin\conv(S_\gamma:\gamma\in\cyc(\sigma))\bigr].
\end{equation}
The events involved are measurable, since the set of configurations $(z_1,\dots,z_m)$ with $0\in\conv(z_1,\dots,z_m)$ is closed.

Now let the increments be i.i.d., let $0<t<1$, let $N$ be independent of the walk with $\PP[N=n]=(1-t)t^n$, $n\ge0$, and, given $N$, let $\sigma$ be uniform on $\Sym_N$. By Cauchy's formula (see~\cite{ABT03}), given $N=n$, the permutation $\sigma$ has $i_k$ cycles of length $k$ for all $k$ with probability $\prod_k(i_k!\,k^{i_k})^{-1}$ whenever $\sum_kki_k=n$. Multiplying by $(1-t)t^n=\prod_ke^{-t^k/k}\prod_k(t^k)^{i_k}$, we see, following Shepp and Lloyd~\cite{SL66}, that the numbers of cycles of lengths $1,2,3,\dots$ are independent Poisson random variables with means $t,t^2/2,t^3/3,\dots$ Given the cycles of $\sigma$, the cycle sums $S_\gamma$ are independent, because the cycles are disjoint, and $S_\gamma$ has the law of $S_{\abs\gamma}$. By the marking and superposition theorems for Poisson processes~\cite[Theorems~3.3 and~5.6]{LP17}, the multiset $\{S_\gamma:\gamma\in\cyc(\sigma)\}$ is therefore a Poisson process with intensity $\sum_k(t^k/k)\PP[S_k\in\cdot\,]=\nu_t$. Applying~\eqref{eq:exch-abs} conditionally on $N$, we obtain~\eqref{eq:poisson-intro}.

The left-hand side of~\eqref{eq:poisson-intro} equals $(1-t)\sum_{n\ge0}p_nt^n$. On the right-hand side, $\Pi_t$ consists of a Poisson number of points with mean $\log\frac1{1-t}$, which, given their number, are i.i.d.\ with law $\nu_t/\nu_t(\R^d)$. Expanding $\nu_t$ as a sum over $k$ and dividing by $1-t$, we see that~\eqref{eq:poisson-intro} is equivalent to~\eqref{eq:gf}. All coefficients in~\eqref{eq:gf} are nonnegative, and comparing the coefficients of $t^n$ gives~\eqref{eq:main}.
\end{proof}

\begin{remark}\label{rem:blocks}
The first step of the proof uses only exchangeability, so~\eqref{eq:exch-abs} reduces $p_n$ to cycle sums also for exchangeable increments that are not independent. By exchangeability, the cycle sums of $\sigma$ have, given the cycle lengths $k_1,\dots,k_m$ listed in any order, the joint law of the consecutive block sums $S_{b_1},S_{b_2}-S_{b_1},\dots,S_{b_m}-S_{b_{m-1}}$ with $b_j=k_1+\dots+k_j$. Hence~\eqref{eq:main} holds with $\PP[0\notin\conv(S^{(1)}_{k_1},\dots,S^{(m)}_{k_m})]$ replaced by the probability that the convex hull of these block sums avoids the origin.
\end{remark}

\begin{proof}[Proof of Theorem~\ref{th:never}]
The hulls $\conv(S_1,\dots,S_n)$ increase with union $\conv\{S_1,S_2,\dots\}$, so $p_n$ decreases to the left-hand side, and $(1-t)\sum_np_nt^n\to\lim_np_n$ as $t\uparrow1$ by Abel's theorem. Realize all $\Pi_t$ on one probability space: take a Poisson process $\Pi$ on $\{1,2,\dots\}\times\R^d\times[0,1]$ with intensity $\sum_kk^{-1}\delta_k\otimes\PP[S_k\in\cdot\,]\otimes du$, and let $\Pi_t$ consist of the points $z$ of those triples $(k,z,u)\in\Pi$ with $u<t^k$. By the thinning theorem~\cite[Chapter~5]{LP17}, $\Pi_t$ has intensity $\nu_t$, and $\Pi_t$ increases to $\Pi_1$ as $t\uparrow1$. The origin lies in $\conv(\Pi_1)$ if and only if it lies in the convex hull of a finite subset of $\Pi_1$, that is, in $\conv(\Pi_t)$ for some $t<1$. By monotone convergence, $\PP[0\notin\conv(\Pi_t)]\downarrow\PP[0\notin\conv(\Pi_1)]$, and~\eqref{eq:poisson-intro} completes the proof.
\end{proof}

\section{Dimension one}\label{sec:dim1}

Let $d=1$ and let the increments be i.i.d. A finite set of reals avoids the origin in its convex hull if and only if it lies in $(0,\infty)$ or in $(-\infty,0)$. Hence
\[
\PP\bigl[0\notin\conv(S^{(1)}_{k_1},\dots,S^{(m)}_{k_m})\bigr]=\prod_{j=1}^m\PP[S_{k_j}>0]+\prod_{j=1}^m\PP[S_{k_j}<0],
\]
and~\eqref{eq:gf} becomes the two-sided form of Sparre Andersen's identity,
\[
\sum_{n=0}^\infty p_nt^n=\exp\Bigl\{\sum_{k=1}^\infty\frac{t^k}k\PP[S_k>0]\Bigr\}+\exp\Bigl\{\sum_{k=1}^\infty\frac{t^k}k\PP[S_k<0]\Bigr\}-1.
\]
For the one-sided form, replace $Q$ in the proof of Theorem~\ref{th:det} by the convex set $\{t\in\R^n:At\le0,\ t_1+\dots+t_n=1\}$. The same computation shows that $\Delta_\sigma$ meets this set if and only if $s_k(\sigma)\le0$ for some $k$, and $Z_\sigma$ meets it if and only if $s_\gamma\le0$ for some $\gamma\in\cyc(\sigma)$. Hence
\[
\sum_{\sigma\in\Sym_n}\ind\bigl\{s_1(\sigma)>0,\dots,s_n(\sigma)>0\bigr\}=\sum_{\sigma\in\Sym_n}\ind\bigl\{s_\gamma>0\text{ for all }\gamma\in\cyc(\sigma)\bigr\}.
\]
Arguing as in the proof of Theorem~\ref{th:main}, we find that $\PP[S_1>0,\dots,S_N>0]$ equals the probability that $\Pi_t$ has no points in $(-\infty,0]$, that is, $\exp\{-\sum_k(t^k/k)\PP[S_k\le0]\}$. Dividing by $1-t=\exp\{-\sum_kt^k/k\}$, we obtain the identity of Sparre Andersen~\cite{SA54}
\begin{equation}\label{eq:SAstrict}
\sum_{n=0}^\infty t^n\PP[S_1>0,\dots,S_n>0]=\exp\Bigl\{\sum_{k=1}^\infty\frac{t^k}k\PP[S_k>0]\Bigr\}.
\end{equation}
Applying~\eqref{eq:SAstrict} to the increments $X_i+\varepsilon$ and letting $\varepsilon\downarrow0$, we obtain the same identity with $\ge$ in place of $>$, since the events $\{S_k>-k\varepsilon$ for $k\le n\}$ decrease to $\{S_k\ge0$ for $k\le n\}$. Multiplying~\eqref{eq:SAstrict} by $1-t$ and letting $t\uparrow1$, by Abel's theorem and monotone convergence we obtain Spitzer's formula~\cite{Spitzer56}, \cite[Chapter~XII]{Feller71}
\[
\PP[S_n>0\text{ for all }n]=\exp\Bigl\{-\sum_{k=1}^\infty\frac{\PP[S_k\le0]}k\Bigr\},
\]
and, applied to the increments $-X_i$, the analogous formula for $\PP[S_n<0\text{ for all }n]$. The sum of the two formulas is the case $d=1$ of Theorem~\ref{th:never}.

\section{Symmetric walks in general position}\label{sec:symmetric}

We now derive the distribution-free formulas of~\cite{KVZ17a} and~\cite{PRZ23} from Theorem~\ref{th:main}. The only additional ingredient is Wendel's theorem~\cite{Wendel62}: if $Y_1,\dots,Y_m$ are random vectors in $\R^d$ whose joint law is invariant under all sign changes $Y_i\mapsto\pm Y_i$, and any $\min(m,d)$ of them are linearly independent almost surely, then
\begin{equation}\label{eq:wendel}
\PP\bigl[0\notin\conv(Y_1,\dots,Y_m)\bigr]=w_{m,d}=2^{1-m}\sum_{j=0}^{d-1}\binom{m-1}j.
\end{equation}
Wendel's original formulation already allows joint invariance under individual sign changes; see~\cite[pp.~109--110]{Wendel62} and also~\cite[Theorem~7]{KT26}. On the walk we impose the general position condition
\begin{description}[font=\normalfont,leftmargin=3.2em]
\item[(GP)] for all $1\le r\le\min(d,n)$ and $1\le i_1<\dots<i_r\le n$, the vectors $S_{i_1},\dots,S_{i_r}$ are almost surely linearly independent.
\end{description}
It holds, for instance, if $X_1$ puts no mass on affine hyperplanes: consecutive block sums of the walk are then independent and put no mass on affine hyperplanes, so each of them avoids the span of the previous ones almost surely.

Recall that $K_n$ is the number of cycles of a uniform random permutation of $\{1,\dots,n\}$. Then $\PP[K_n=m]=\stirling nm/n!$, where the unsigned Stirling numbers of the first kind are defined by $\sum_m\stirling nmu^m=u(u+1)\cdots(u+n-1)$, and, with $h(t)=\log\frac1{1-t}$,
\begin{equation}\label{eq:Kn}
\sum_{n=1}^\infty\PP[K_n=m]\,t^n=\frac{h(t)^m}{m!}=\frac1{m!}\sum_{k_1,\dots,k_m\ge1}\frac{t^{k_1+\dots+k_m}}{k_1\cdots k_m},\qquad m\ge1.
\end{equation}
Finally, let $B_n(u)=\prod_{r=1}^n(u+2r-1)=\sum_{j=0}^nB(n,j)\,u^j$, and $B(n,j)=0$ for $j\notin\{0,\dots,n\}$.

\begin{corollary}[\cite{KVZ17a}]\label{cor:kvz}
Let the increments be i.i.d.\ and symmetric, $X_1\eqd-X_1$, and let \textup{(GP)} hold. Then
\[
p_n=\E\,w_{K_n,d}=\frac2{2^nn!}\sum_{\ell\ge0}B(n,d-1-2\ell).
\]
\end{corollary}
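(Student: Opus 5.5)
Two steps: first $p_n=\E\,w_{K_n,d}$ from \eqref{eq:main} and Wendel's theorem, then the closed form via generating functions.

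\emph{First equality.} I apply \eqref{eq:main} and show that for $k_1+\dots+k_m=n$,
\[
\PP\bigl[0\notin\conv(S^{(1)}_{k_1},\dots,S^{(m)}_{k_m})\bigr]=w_{m,d}.
\]
Put $Y_j=S^{(j)}_{k_j}$. These are independent and each is symmetric, since $X_1\eqd-X_1$. Hence their joint law is invariant under individual sign changes.

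For the linear independence required by Wendel, I use Remark~\ref{rem:blocks}. By exchangeability, $(Y_1,\dots,Y_m)$ has the same joint law as the consecutive block sums $S_{b_1},S_{b_2}-S_{b_1},\dots,S_{b_m}-S_{b_{m-1}}$ of a single walk of length $n$. The linear span of any $r\le\min(m,d)$ of these block sums is spanned by partial sums $S_{b_j}$, with one extra partial sum per block. More precisely, a set of $r$ blocks can be rewritten by a unimodular change of variables as differences of at most $2r$ partial sums. That is not directly enough, so I argue differently.

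The dimension of the span of the chosen blocks equals the rank of a family of partial sums forming an interval structure. It is cleaner to use Remark~\ref{rem:blocks} only for the hull probability. Then I apply (GP) directly: for blocks $j_1<\dots<j_r$, linear independence of $S_{b_{j_i}}-S_{b_{j_i-1}}$ means a certain $r\times r$ triangular combination of the partial sums is nondegenerate. I expect this step, translating (GP) on partial sums into independence of arbitrary sets of block sums, to be the main obstacle.

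The first thing I would try is to order the cycle sums so that the chosen blocks are the first $r$ consecutive blocks. This is allowed because, given the cycle lengths, the cycle sums are exchangeable. Then $Y_1,\dots,Y_r$ are related to $S_{b_1},\dots,S_{b_r}$ by a unimodular triangular map, and (GP) applies.

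Wendel's formula \eqref{eq:wendel} then gives the value $w_{m,d}$. Substituting into \eqref{eq:main} and using \eqref{eq:Kn}, namely that $\frac1{m!}\sum 1/(k_1\cdots k_m)=\PP[K_n=m]$, yields $p_n=\E\,w_{K_n,d}$.

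\emph{Closed form.} Using $\PP[K_n=m]=\stirling nm/n!$, I write
\[
n!\,\E\,w_{K_n,d}=\sum_{j=0}^{d-1}\sum_m\stirling nm 2^{1-m}\binom{m-1}{j}.
\]
The inner sum is the coefficient of $v^j$ in $2\sum_m\stirling nm 2^{-m}(1+v)^{m-1}$. With $u=(1+v)/2$, this equals
\[
\frac{2}{1+v}\,u(u+1)\cdots(u+n-1)=2^{1-n}\prod_{r=2}^{n}(v+2r-1)=2^{1-n}\frac{B_n(v)}{1+v}.
\]
Summing the coefficients of $v^j$ for $j\le d-1$ in $B_n(v)/(1+v)$ amounts to multiplying by $(1-v)^{-1}$ and taking the coefficient of $v^{d-1}$. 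Since $1/\bigl((1+v)(1-v)\bigr)=1/(1-v^2)=\sum_\ell v^{2\ell}$, this gives $\sum_{\ell\ge0}B(n,d-1-2\ell)$. Dividing by $n!$ gives the stated formula.
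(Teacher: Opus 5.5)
Your proposal is correct and follows the paper's proof. You apply Wendel's theorem to $S^{(1)}_{k_1},\dots,S^{(m)}_{k_m}$, get linear independence of any $r\le\min(m,d)$ of them by viewing them as the first $r$ consecutive block sums of one walk, and then use \eqref{eq:Kn}. One justification needs correcting: that reordering is valid because of independence and i.i.d.\ increments, not because the cycle sums are ``exchangeable'' (they are not, having different laws). Your closed-form computation via $B_n(v)/(1+v)$ and $1/(1-v^2)$ is just the generating-function version of the paper's Pascal's-rule step $\sum_{j<d}\binom{m-1}j=\sum_{j\le d-1,\,j\equiv d-1}\binom mj$, followed by the same product formula for $B_n$.
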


\begin{proof}
Let $k_1,\dots,k_m\ge1$ and $k_1+\dots+k_m=n$. The vectors $S^{(1)}_{k_1},\dots,S^{(m)}_{k_m}$ are independent and symmetric, so their joint law is invariant under sign changes. Any $r\le\min(m,d)$ of them have the joint law of consecutive block sums $S_{b_1},S_{b_2}-S_{b_1},\dots,S_{b_r}-S_{b_{r-1}}$ of the walk with $b_r\le n$. These are obtained from $S_{b_1},\dots,S_{b_r}$ by an invertible linear transformation and are therefore linearly independent almost surely by (GP). By Wendel's theorem, every probability in~\eqref{eq:main} equals $w_{m,d}$, and by~\eqref{eq:Kn} the remaining sum over $k_1,\dots,k_m$ equals $\PP[K_n=m]$. This proves that $p_n=\E\,w_{K_n,d}$.

For the second expression, note that $\sum_{j<d}\binom{m-1}j=\sum_{j\le d-1,\,j\equiv d-1}\binom mj$ by Pascal's rule, where $\equiv$ refers to congruence modulo $2$, and that
\[
\sum_m\stirling nm2^{n-m}(1+u)^m=2^n\prod_{r=0}^{n-1}\Bigl(\frac{1+u}2+r\Bigr)=B_n(u).
\]
Hence $2^nn!\,\E\,w_{K_n,d}=2\sum_{j\le d-1,\,j\equiv d-1}B(n,j)$.
\end{proof}

The same argument applies to symmetrically exchangeable increments, that is, when $(\varepsilon_1X_{\tau(1)},\dots,\varepsilon_nX_{\tau(n)})$ has the law of $(X_1,\dots,X_n)$ for all $\tau\in\Sym_n$ and all signs $\varepsilon_i$; this is the setting of~\cite{KVZ17a}. By Remark~\ref{rem:blocks}, the independent sums in~\eqref{eq:main} are then replaced by block sums, and changing the signs of all increments in one block changes the sign of this block sum alone, so Wendel's theorem still applies. The mixture $\E\,w_{K_n,d}$ also appears in~\cite[Corollary~23]{KT26}, where marking the cycles connects it with the representation $\PP[0\in\conv(S_1,\dots,S_n)]=\PP[U_2+\dots+U_n\ge d]$ with independent Bernoulli variables $U_m$ of parameter $1/(2m)$, see~\cite[Proposition~21]{KT26}.

\begin{corollary}[\cite{PRZ23}]\label{cor:prz}
If the increments are i.i.d.\ and symmetric and \textup{(GP)} holds for every $n$, for instance if $X_1$ is symmetric and puts no mass on affine hyperplanes, then
\begin{equation}\label{eq:prz}
\sum_{n=0}^\infty p_nt^n=(-1)^d+\frac2{\sqrt{1-t}}\sum_{\substack{0\le j\le d-1\\ j\equiv d-1\ (\mathrm{mod}\ 2)}}\frac{(h(t)/2)^j}{j!},\qquad 0\le t<1.
\end{equation}
\end{corollary}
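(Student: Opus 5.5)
Since (GP) holds for every $n$, we may use Corollary~\ref{cor:kvz}, or more directly its first step: for every $m\ge1$ and all $k_1,\dots,k_m\ge1$, the probability in \eqref{eq:gf} equals $w_{m,d}$. To see this, apply the argument in the proof of Corollary~\ref{cor:kvz} with $n=k_1+\dots+k_m$, which gives independence, sign invariance and Wendel's theorem. Substituting into \eqref{eq:gf} and using \eqref{eq:Kn} to sum over $k_1,\dots,k_m$ gives
\[
\sum_{n=0}^\infty p_nt^n=1+\sum_{m=1}^\infty w_{m,d}\,\frac{h(t)^m}{m!}.
\]
All terms are nonnegative, so rearrangement is legitimate for $0\le t<1$.

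Next rewrite $w_{m,d}$ using the parity identity from the proof of Corollary~\ref{cor:kvz}:
\[
w_{m,d}=2^{1-m}\sum_{j\le d-1,\ j\equiv d-1}\binom mj,\qquad m\ge1.
\]
Exchange the finite sum over $j$ with the sum over $m$. For each fixed $j$, evaluate the full exponential series, starting at $m=0$:
\[
\sum_{m\ge0}2^{1-m}\binom mj\frac{h^m}{m!}=2\,\frac{(h/2)^j}{j!}\sum_{m\ge j}\frac{(h/2)^{m-j}}{(m-j)!}=2\,\frac{(h/2)^j}{j!}\,e^{h/2}.
\]
Since $e^{h(t)/2}=(1-t)^{-1/2}$, this produces exactly the sum in \eqref{eq:prz}.

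It remains to correct for the $m=0$ term, which we added artificially. For $m=0$ the sum $2\sum_{j\equiv d-1}\binom0j$ contributes only the $j=0$ term. It equals $2$ if $d$ is odd and $0$ if $d$ is even. Subtracting it and adding the leading $1$ gives $1-2=-1$ or $1-0=1$, that is, $(-1)^d$.

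The main point needing care is this bookkeeping at $m=0$, where the convention $w_{0,d}$ differs from the value of the parity formula. A lesser point is justifying the interchange of sums, which is immediate because every sum here has nonnegative terms.
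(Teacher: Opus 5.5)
Your proposal is correct and follows the paper's proof essentially step for step: reduce \eqref{eq:gf} to $1+\sum_{m\ge1}w_{m,d}\,h(t)^m/m!$ via Wendel's theorem and \eqref{eq:Kn}, rewrite $w_{m,d}$ by the parity identity, sum the exponential series from $m=0$, and remove the spurious $m=0$ contribution $2\cdot\ind\{d\text{ odd}\}$ to obtain $(-1)^d$. Your added remarks are details the paper leaves implicit: that (GP) is needed for every $n$ because $k_1+\dots+k_m$ is arbitrary, and that nonnegativity justifies the interchange of sums.
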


\begin{proof}
As in the proof of Corollary~\ref{cor:kvz}, the probabilities in~\eqref{eq:gf} are equal to $w_{m,d}$, so~\eqref{eq:gf} becomes $\sum_np_nt^n=1+\sum_{m\ge1}w_{m,d}\,h(t)^m/m!$. Write $w_{m,d}=2^{1-m}\sum_{j\le d-1,\,j\equiv d-1}\binom mj$ and use
\[
\sum_{m=0}^\infty\binom mj\frac{(h/2)^m}{m!}=\frac{(h/2)^j}{j!}\,e^{h/2},\qquad e^{h(t)/2}=\frac1{\sqrt{1-t}}.
\]
The term $m=0$ has to be removed; it contributes $2$ if $j=0$, that is, if $d$ is odd. Since $1-2\cdot\ind\{d\text{ odd}\}=(-1)^d$, the claim follows.
\end{proof}

In~\cite{PRZ23} the right-hand side of~\eqref{eq:prz} is written with $h(t)^j/(2j)!!$, which is the same as $(h(t)/2)^j/j!$, and without the additive correction $(-1)^d$. This does not affect the coefficients of $t^n$ with $n\ge1$; the additive correction $(-1)^d$ makes the coefficient of $t^0$ equal to $p_0=1$. For example, for $d=2$ the generating function is $1+h(t)/\sqrt{1-t}$.

\section*{Declaration of generative AI and AI-assisted technologies}

The main results of this paper were obtained in the course of a long and continuous discussion with the large language model Astra (OpenAI). The permutation-averaging identity for valuations on which the proof of Theorem~\ref{th:det} rests was obtained earlier, without the use of AI tools, in the bachelor's thesis of the first author~\cite{Bol25}; see also~\cite{BZ26}. The text was prepared and edited with the assistance of Claude Opus 5.5 (Anthropic). The authors reviewed and edited the content as needed and take full responsibility for the content of the publication.

\bigskip

\noindent
Artyom Bolotin\\
St. Petersburg Department of the Steklov Mathematical Institute
of the Russian Academy of Sciences,\\
27 Fontanka, St. Petersburg 191023, Russia\\
St. Petersburg State University,
7/9 Universitetskaya Emb., St. Petersburg 199034, Russia\\
\texttt{bolotin2003@yandex.ru}

\bigskip

\noindent
Dmitry Zaporozhets\\
St. Petersburg Department of the Steklov Mathematical Institute
of the Russian Academy of Sciences
27 Fontanka, St. Petersburg 191023, Russia\\
St. Petersburg State University,
7/9 Universitetskaya Emb., St. Petersburg 199034, Russia\\
\texttt{zap1979@gmail.com}

\end{document}